\newtheorem{thm}{Theorem}
\theoremstyle{definition}
\providecommand{\RR}{\mathbb{R}}
\providecommand{\NN}{\mathbb{N}}
\def\mn{\mathcal{M}_n}
\def\sn{\mathcal{S}_n}
\def\M{\mathcal{M}}
\def\rr{\mathrm{real}}
\def\cE{\mathcal{E}}
\def\chrisA{\mathcal{A}}
\begin{document}

\title[A Real Nullstellensatz for free modules]{A Real Nullstellensatz for free modules}

\author{J. Cimpri\v c}

\keywords{matrix polynomials, real algebraic geometry, real nullstellensatz}

\subjclass{14P, 13J30}

\date{\today}

\address{Jaka Cimpri\v c, University of Ljubljana, Faculty of Math. and Phys.,
Dept. of Math., Jadranska 19, SI-1000 Ljubljana, Slovenija. 
E-mail: cimpric@fmf.uni-lj.si. www page: http://www.fmf.uni-lj.si/ $\!\!\sim$cimpric.}

\begin{abstract}
Let $\chrisA$ be the algebra of all $n \times n$ matrices with entries from $\RR[x_1,\ldots,x_d]$
and let $G_1,\ldots,G_m,F \in \chrisA$. We will show that $F(a)v=0$
for every $a \in \RR^d$ and $v \in \RR^n$ such that $G_i(a)v=0$ for all $i$
if and only if $F$ belongs to the smallest real left ideal of $\chrisA$ which contains $G_1,\ldots,G_m$.
Here a left ideal $J$ of $\chrisA$ is real if for every
$H_1,\ldots,H_k \in \chrisA$ such that $H_1^T H_1+\ldots+H_k^T H_k \in J+J^T$
we have that $H_1,\ldots,H_k \in J$.
We call this result the one-sided Real Nullstellensatz for matrix polynomials.
We first prove by induction on $n$ that it holds when $G_1,\ldots,G_m,F$ have zeros everywhere except in the first row.
This auxiliary result can be formulated as a Real Nullstellensatz for the free module 
$\RR[x_1,\ldots,x_d]^n$. 
\end{abstract}

\begin{comment}
Noncommutative Real Algebraic Geometry comes in two flavors, one-sided and two-sided.
While the two-sided theory is already well developed for various kinds of noncommutative polynomials,
the one-sided theory has been limited so far mostly to the study of free polynomials.
The main contribution of this paper is the proof of a one-sided Real Nullstellensatz for matrix polynomials.
A reformulation of this result, which we call the Real Nullstellensatz for free modules, may also be
interesting to the commutative theory. We first prove this reformulation by induction on the rank of 
the free module by using the commutative Real Nullstellensatz of Dubois, Risler and Efroymson as  induction base.
\end{comment}

\maketitle

\thispagestyle{empty}

\section{Introduction}

For given polynomials $g_1,\ldots,g_m \in \RR[x]$, where $x=(x_1,\ldots,x_d)$, the Positivstellensatz \cite{krivine,stengle} gives an algebraic characterization
of the polynomials $f \in \RR[x]$ which are nonnegative on the set $\{a\in \RR^d \mid g_1(a) \ge 0,\ldots,g_m(a)\ge 0\}$. Its
special case, the Real Nullstellensatz \cite{dub, ris,efr}, gives an algebraic characterization of the 
polynomials $f \in \RR[x]$ which vanish on the set $\{a\in \RR^d \mid g_1(a) = 0,\ldots,g_m(a)= 0\}$.

Noncommutative Real Algebraic Geometry strives to generalize the Positivstellensatz and the Real Nullstellensatz
to other $\ast$-algebras. The first nontrivial example is the $\ast$-algebra $\mn(\RR[x])$ 
of all real matrix polynomials of size $n$. The one-sided part of  the theory considers the following problems where
$\sn(\RR[x])=\{H \in \M_n(\RR[x]) \mid H^T=H\}$.
\begin{enumerate}
\item (One-sided Positivstellensatz) Given $G_1,\ldots,G_m \in \sn(\RR[x])$, characterize all $F \in \sn(\RR[x])$
such that $\langle F(a)v,v \rangle \ge 0$ for all $a \in \RR^d$ and all $v \in \RR¢^n$ for which
$\langle G_i(a) v,v \rangle \ge 0$ for all $i$.
\item (One-sided Real Nullstellensatz) Given $G_1,\ldots,G_m \in \mn(\RR[x])$, characterize all $F \in \mn(\RR[x])$
such that $F(a)v=0$ for all $a \in \RR^d$ and all $v \in \RR¢^n$ satisfying $G_1(a)v=\ldots=G_m(a)v=0$.
\end{enumerate}
The aim of this paper is to solve problem (2). The special case $d=1$ has already been solved in \cite[Section 6]{chmn}.
Problem (1) is widely open even when $G_i$ and $F$ are constant and $m \ge 3$; see \cite{slemma}. If $G_1,\ldots,G_m$ belong to the center of $\mn(\RR[x])$,
then problem (1) coincides with problem (3) below. A variant of problem (1) is considered in \cite[Theorem 2.1]{c2}.

The two-sided part of the theory considers the following problems.
\begin{enumerate}
\setcounter{enumi}{2}
\item (Two-sided Positivstellensatz) Given $G_1,\ldots,G_m \in \sn(\RR[x])$, characterize all $F \in \sn(\RR[x])$
such that $F(a)$ is positive semidefinite for all $a \in \RR^d$ for which all $G_i(a)$ are positive semidefinite.
\item (Two-sided Real Nullstellensatz) Given $G_1,\ldots,G_m \in \mn(\RR[x])$, characterize all $F \in \mn(\RR[x])$
such that $F(a)=0$ for all $a \in \RR^d$ satisfying $G_1(a)=\ldots=G_m(a)=0$.
\end{enumerate}
Problems (3) and (4) have already been solved in \cite[Theorems 14 and 18]{c1} by using the ideas from \cite[Section 4]{sch2}.
In special cases, however, stronger results exist; see \cite{sh1,sh2,sh3,zalar2} if $G_1(x)=R^2-\sum_{i=1}^d x_i^2$ for some $R$,
\cite{j,d,ds,ss} if $d=1$ and \cite{klep1,klep2,zalar1} if $G_i$ and $F$ are linear.

Problems (1)-(4) also make sense for free $\ast$-algebras if we replace evaluations in real points
with finite-dimensional $\ast$-representations, i.e. with evaluations in $d$-tuples of same-size real matrices. 
In this context, problem (2) was solved in \cite{chmn}. For additional information see \cite{chkmn} and \cite{nelson}. 
Special cases of problems (1) and (3) and (4) were solved in \cite{hereditary,hmp} and \cite{ncsos, isometries} and \cite{chmn2} respectively.

Problem (3) is also solved for the Toeplitz algebra \cite{ncfejer}
and partial results exist for Weyl algebras \cite{weyl1, weyl2,nahas}, enveloping algebras of finite-dimensional Lie-algebras \cite{env}
and the quantum plane \cite{qua}. Here we replace evaluations in real points  with ``well-behaved" $\ast$-representations. 
On the other hand, we are not aware of any results related to problems (1) and (2) for these algebras, but \cite{shankar} raises hopes.

The plan of our paper is as follows. Let $R$ be a  finitely generated commutative unital $\RR$-algebra. 
We will first recall the Real Nullstellensatz for $R$ (see Theorem \ref{thm1}). We will then formulate and prove
a Real Nullstellensatz for the free module $R^n$ (see Theorem \ref{thm2}). The proof is by induction on $n$ and uses Theorem \ref{thm1} at each step.
Theorem \ref{thm3} is the solution of problem (2). It is just a reformulation of Theorem \ref{thm2}.

Let us write $V_\RR(R)$ for the set of all $\RR$-algebra homomorphisms from $R$ to $\RR$. 
By the Artin-Lang homomorphism theorem, $V_\RR(R)$ is  nonempty if and only if $-1 \not\in \sum R^2$.
If $R=\RR[x_1,\ldots,x_d]$ for some $d$ then $V_\RR(R)$ can be identified with $\RR^d$.

For every ideal $I$ of $R$ we define its \textbf{real zero set} by
$$Z_\RR(I):=\{ \phi \in V_\RR(R) \mid \phi(g)=0 \text{ for all } g \in I\}$$
and its \textbf{real saturation} by
$$\sqrt[\cE]{I}:=\{ f \in R \mid \phi(f)=0 \text{ for all } \phi \in Z_\RR(I)\}.$$
Recall that an ideal $J$ of $R$ is \textbf{real} if
for every $r_1,\ldots,r_K \in R$ such that $\sum_{i=1}^K r_i^2 \in J$ we have that
$r_1, \ldots, r_K \in J$. The \textbf{real radical} $\sqrt[\rr]{I}$ of an ideal $I$ of $R$ is 
by definition the smallest real ideal which contains $I$. It is easy to show that
\begin{equation}
\label{eq1}
\sqrt[\rr]{I} = \big\{ f \in R \mid \begin{array}{c} f^{2k}+\sum_{i=1}^L s_i^2 \in I \text{ for some } k,L \in \NN\\  \text{ and some } s_1,\ldots,s_L \in R \end{array}\big\}.
\end{equation}

\begin{thm}[Real Nullstellensatz] 
\label{thm1}
For every ideal $I$ of $R$,
$$\sqrt[\cE]{I}=\sqrt[\rr]{I}.$$
\end{thm}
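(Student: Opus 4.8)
The plan is to prove the two inclusions separately: $\sqrt[\rr]{I}\subseteq\sqrt[\cE]{I}$ is elementary, while $\sqrt[\cE]{I}\subseteq\sqrt[\rr]{I}$ is obtained by reducing to the Artin--Lang homomorphism theorem via a localization.

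For the first inclusion I would simply observe that $\sqrt[\cE]{I}$ is itself a real ideal containing $I$. It is an ideal, being the intersection of the kernels of the $\RR$-algebra homomorphisms $\phi$ with $\phi(I)=0$; it contains $I$ tautologically; and if $\sum_{i=1}^{K}r_i^2\in\sqrt[\cE]{I}$ then $\sum_{i=1}^{K}\phi(r_i)^2=0$ for every $\phi\in Z_\RR(I)$, so $\phi(r_i)=0$ for all $i$ because $\RR$ is formally real, i.e. each $r_i\in\sqrt[\cE]{I}$. Since $\sqrt[\rr]{I}$ is by definition the smallest real ideal containing $I$, the inclusion follows. (Equivalently one reads this off from \eqref{eq1}: if $f^{2k}+\sum_j s_j^2\in I$, then $\phi(f)^{2k}+\sum_j\phi(s_j)^2=0$ for $\phi\in Z_\RR(I)$, forcing $\phi(f)=0$.)

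For the reverse inclusion I argue by contraposition: assuming $f\notin\sqrt[\rr]{I}$, I must produce some $\phi\in Z_\RR(I)$ with $\phi(f)\neq 0$. Pass to the localization $R_f=R[1/f]$ and set $A:=R_f/IR_f$, a finitely generated $\RR$-algebra. Its $\RR$-algebra homomorphisms to $\RR$ correspond exactly to the points $\phi\in Z_\RR(I)$ with $\phi(f)\neq 0$, so by the Artin--Lang homomorphism theorem it suffices to show $-1\notin\sum A^2$. Suppose not. Lifting an expression $-1=\sum(\text{squares})$ back to $R_f$ and clearing denominators — multiplying through by a suitable even power $f^{2N}$, using that $f^{2N}$ times a sum of squares is again a sum of squares, and, after multiplying by a further power of $f$, replacing the contribution of $IR_f$ by an honest element of $I$ — one arrives at an identity $f^{2N}+\sum_j g_j^2\in I$ with $g_j\in R$. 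By the description \eqref{eq1} of the real radical this says $f\in\sqrt[\rr]{I}$, a contradiction. Hence $-1\notin\sum A^2$, and Artin--Lang supplies the required homomorphism.

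The only slightly delicate point in this argument is the denominator-clearing step, where one must keep track of the parity of the exponents of $f$ so that the final relation is genuinely of the form $f^{2N}+(\text{sum of squares})\in I$ and not $f^{\text{odd}}+\cdots$, and check that each term coming from $IR_f$ can be absorbed into $I$; this is routine bookkeeping. The real substance of the statement is carried entirely by the Artin--Lang homomorphism theorem, which I take as known.
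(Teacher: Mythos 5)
Theorem~\ref{thm1} is not proved in the paper: it is recalled from the works of Dubois, Risler and Efroymson cited in the introduction and then used as the induction base for Theorem~\ref{thm2}, so there is no ``paper's own proof'' to compare against. Your proof is nonetheless correct and is the standard argument. The easy inclusion holds because $\sqrt[\cE]{I}$ is a real ideal containing $I$ (being an intersection of kernels of homomorphisms to the formally real field $\RR$), so it contains the smallest such ideal $\sqrt[\rr]{I}$. The hard inclusion is correctly reduced, by localizing at $f$ and passing to $A=R_f/IR_f$, to the Artin--Lang homomorphism theorem, which is exactly the tool the paper itself singles out for detecting real points of a finitely generated $\RR$-algebra. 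One small point worth making explicit in the denominator-clearing step, which you flag as bookkeeping: the identity $-1=\sum(\text{squares})+(\text{element of }IR_f)$ lives in $R_f$, and after multiplying by $f^{2N}$ the resulting relation still only holds in $R_f$; since $R\to R_f$ need not be injective, one must multiply by one further even power $f^{2q}$ to kill the localization kernel and obtain a genuine identity $f^{2(N+q)}+\sum_j(f^{q}g_j)^2\in I$ in $R$, which is what \eqref{eq1} requires. With that caveat your argument is complete.
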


\section{A Real Nullstellensatz for free modules}

Let $n$ be a fixed integer and let $M$ be the free $R$-module of rank $n$, i.e. $M=R^n$. 
We would like to generalize Theorem \ref{thm1} to submodules of $M$. 
Let $V_\RR(M)$ be the set of all pairs $(\phi,\mathbf{u})$ where $\phi \in V_\RR(R)$ and $\mathbf{u} \in \RR^n$.
For every submodule $N$ of $M$ we define its \textbf{real zero set}
$$Z_\RR(N):=\{(\phi,\mathbf{u}) \in V_\RR(M) \mid \langle \phi^n(\mathbf{g}),\mathbf{u} \rangle =0 \text{ for every } \mathbf{g} \in N\}$$
and its \textbf{real saturation} 
$$\sqrt[\cE]{N}:=\{\mathbf{f} \in M \mid \langle \phi^n(\mathbf{f}),\mathbf{u} \rangle =0  \text{ for all } (\phi,\mathbf{u}) \in Z_\RR(N)\}.$$
Here $\phi^n(\mathbf{h})$ means that we apply $\phi$ componentwise to $\mathbf{h}$ and $\langle \cdot,\cdot \rangle$ is the standard inner product on $\RR^n$,
hence $\langle \phi^n(\mathbf{h}),\mathbf{u} \rangle=\sum_{i=1}^n \phi(h_i)u_i$.

We say that a submodule $N$ of $M$ is \textbf{real} if for every $\mathbf{m}_1,\ldots,\mathbf{m}_K \in M$
such that $\sum_{i=1}^K \mathbf{m}_i \otimes \mathbf{m}_i \in M \otimes N+N \otimes M$ we have that $\mathbf{m}_1,\ldots,\mathbf{m}_K \in N$.
(Here we consider $M \otimes N$ and $N \otimes M$ as submodules of $M \otimes M$ and all tensor products
are over $R$.) For every submodule $N$ of $M$ there exists the smallest real submodule of $M$ which contains it.
We will denote it by $\sqrt[\rr]{N}$ and call it the \textbf{real radical} of $N$.

An analogue of formula \eqref{eq1} exists but it is more complicated. We have to introduce first an
\textbf{auxiliary radical}
\begin{equation}
\sqrt[\alpha]{N} = \big\{\mathbf{f} \in M \mid
\begin{array}{c}
 \mathbf{f} \otimes \mathbf{f}+\sum_{i=1}^L \mathbf{s}_i \otimes \mathbf{s}_i \in M \otimes N+N \otimes M
\\
\text{ for some } L \in \NN \text{ and } \mathbf{s}_1,\ldots,\mathbf{s}_L \in M
\end{array}
\big\}.
\end{equation}
Clearly, the set $\sqrt[\alpha]{N}$ is a submodule of $M$ and
\begin{equation}
\label{eq3}
\sqrt[\rr]{N}=N \cup \sqrt[\alpha]{N} \cup \sqrt[\alpha]{\sqrt[\alpha]{N}} \cup \ldots.
\end{equation}

Here is the main result of this paper.

\begin{thm}
\label{thm2} For every submodule $N$ of $R^n$
$$\sqrt[\cE]{N}=\sqrt[\rr]{N}.$$
\end{thm}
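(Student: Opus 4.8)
The plan is to prove $\sqrt[\cE]{N} = \sqrt[\rr]{N}$ by induction on the rank $n$, using Theorem \ref{thm1} both as the base case $n=1$ and as the engine of the inductive step. The inclusion $\sqrt[\rr]{N} \subseteq \sqrt[\cE]{N}$ should be the easy direction: one checks that $\sqrt[\cE]{N}$ is itself a real submodule containing $N$, so it contains the smallest such. Concretely, if $\sum_i \mathbf{m}_i \otimes \mathbf{m}_i \in M\otimes N + N\otimes M$, then evaluating at any $(\phi,\mathbf u) \in Z_\RR(N)$ — after extending $\phi$ to the relevant tensor products and using that $\langle\phi^n(\mathbf g),\mathbf u\rangle = 0$ for $\mathbf g \in N$ — forces $\sum_i \langle\phi^n(\mathbf m_i),\mathbf u\rangle^2 = 0$, hence each $\langle\phi^n(\mathbf m_i),\mathbf u\rangle = 0$; this shows $\mathbf m_i \in \sqrt[\cE]{N}$ and reality follows, and a similar direct check handles $N \subseteq \sqrt[\cE]{N}$. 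Combined with \eqref{eq3}, this gives $\sqrt[\rr]{N}\subseteq\sqrt[\cE]{N}$.

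For the hard inclusion $\sqrt[\cE]{N} \subseteq \sqrt[\rr]{N}$, suppose $\mathbf f \notin \sqrt[\rr]{N}$; I want to produce $(\phi,\mathbf u)\in Z_\RR(N)$ with $\langle\phi^n(\mathbf f),\mathbf u\rangle \neq 0$. The idea is to split off the last coordinate. Let $\pi\colon R^n \to R$ be projection onto the $n$-th coordinate and $\iota\colon R^{n-1}\to R^n$ the inclusion of the first $n-1$ coordinates, so $0 \to R^{n-1} \to R^n \to R \to 0$ is split exact. Set $I := \pi(N) \subseteq R$, an ideal of $R$. One natural case split: either $\mathbf f$ has a nonzero image under evaluation forced by the $R$-part, or everything relevant lives in the rank $n-1$ submodule $N \cap R^{n-1}$. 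In the first case I would use Theorem \ref{thm1} applied to the ideal $I$ (or to a suitable real ideal built from $N$) to find $\phi \in V_\RR(R)$ detecting $\pi(\mathbf f)$ off $Z_\RR(I)$, and then choose $\mathbf u = (0,\ldots,0,1)$; the compatibility condition $\langle\phi^n(\mathbf g),\mathbf u\rangle = \phi(\pi(\mathbf g)) = 0$ for $\mathbf g\in N$ holds precisely because $\phi \in Z_\RR(I)$. In the second case, where the obstruction is concentrated in the kernel, one restricts to $N' := N\cap R^{n-1}$ and applies the induction hypothesis to the rank $n-1$ module $R^{n-1}$ to get a pair $(\phi,\mathbf u')$ with $\mathbf u' \in \RR^{n-1}$, then sets $\mathbf u = (\mathbf u',0) \in \RR^n$.

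The main obstacle — and the place where the tensor-product definition of ``real submodule'' and the auxiliary radical $\sqrt[\alpha]{\cdot}$ must be handled carefully — is that the decomposition $R^n = R^{n-1}\oplus R$ does not respect the real-radical operation in an obvious way: $\sqrt[\rr]{N\cap R^{n-1}}$ need not be $\sqrt[\rr]{N}\cap R^{n-1}$, and one cannot simply intersect. So the genuine work is to show that if $\mathbf f\notin\sqrt[\rr]{N}$ then, after at most finitely many applications of $\sqrt[\alpha]{\cdot}$ are ruled out (using \eqref{eq3}), one can choose coordinates and an appropriate projection/section so that $\pi(\mathbf f)$ lies outside the real radical of the ideal $\pi(N) + (\text{Schur-complement-type correction terms})$, or so that $\mathbf f$ lands in a strictly smaller free module where induction applies. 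I expect the correct device is to pass to the localization or quotient that ``kills'' a vector $\mathbf u$ and to track how $M\otimes N + N\otimes M$ behaves: the condition $\mathbf f\otimes\mathbf f + \sum \mathbf s_i\otimes\mathbf s_i \in M\otimes N+N\otimes M$ should, upon contracting against a putative $\mathbf u$, degenerate into an ordinary sum-of-squares membership in an ideal of $R$, at which point Theorem \ref{thm1} and formula \eqref{eq1} close the argument. Verifying that this contraction is well-defined and that no obstruction is lost in the process is the crux; everything else is bookkeeping on split exact sequences of free modules.
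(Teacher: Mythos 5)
The easy inclusion $\sqrt[\rr]{N}\subseteq\sqrt[\cE]{N}$ is handled exactly as in the paper, so that part is fine. The hard inclusion, however, has a genuine gap that you yourself half-acknowledge.

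Your strategy is contrapositive: given $\mathbf f\notin\sqrt[\rr]{N}$, you want to produce a witness $(\phi,\mathbf u)\in Z_\RR(N)$ with $\langle\phi^n(\mathbf f),\mathbf u\rangle\neq 0$, and you propose to case-split along $0\to R^{n-1}\to R^n\xrightarrow{\pi} R\to 0$, using $\pi(N)$ in one case and $N\cap R^{n-1}$ in the other. This case split does not cover what can happen, and the second branch as written is simply incorrect: if $(\phi,\mathbf u')\in Z_\RR(N\cap R^{n-1})$, there is no reason for $(\phi,(\mathbf u',0))$ to lie in $Z_\RR(N)$, because a generator $\mathbf g\in N$ with $\pi(\mathbf g)\neq 0$ imposes a constraint $\sum_{j<n}\phi(g_j)u'_j=0$ that is invisible to $N\cap R^{n-1}$. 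Concretely, with $N=R\,(x,1)\subset R[x]^2$ one has $\pi(N)=R$ (so your first case never triggers) and $N\cap R^1=0$ (so your second case gives an unconstrained $\phi$ and no way to force $\phi(x)u'_1=0$). The root of the trouble is exactly what you flag at the end: the decomposition $R^n=R^{n-1}\oplus R$ simply does not interact well with $\sqrt[\rr]{\cdot}$, and ``projection'' plus ``intersection'' throws away the coupling between coordinates, which is precisely what distinguishes a rank-$n$ submodule from a rank-$n$ ideal. You gesture at the right repair (``Schur-complement-type correction terms'', ``contracting against a putative $\mathbf u$''), but you never produce it.

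The paper avoids all of this by arguing directly rather than by contraposition, and it does the dimension reduction with a genuine Schur-complement elimination rather than with $\pi$ and $\cap R^{n-1}$. Fixing generators $\mathbf g_1,\dots,\mathbf g_m$ of $N$ and $\mathbf f\in\sqrt[\cE]{N}$, it first shows (Claim 1) that each $g_{kl}\mathbf f$ lies in $\sqrt[\rr]{N}$. For this it forms, in $R^{n-1}$, the cross-eliminated vectors $\mathbf g'_i$ with components $g_{kl}g_{ij}-g_{kj}g_{il}$ and the corresponding $\mathbf f'$; the inner product $\langle\phi^{n-1}(\mathbf g'_i),\mathbf v\rangle$ is recognized as $\langle\phi^n(\mathbf g_i),\mathbf u\rangle$ for a carefully chosen $\mathbf u$ built from $\mathbf v$ and $\phi(g_{k\bullet})$, so $\mathbf f'\in\sqrt[\cE]{N'}$ and the induction hypothesis gives $\mathbf f'\in\sqrt[\rr]{N'}$, which is then pushed back into $\sqrt[\rr]{N}$ using the identity $g_{kl}\mathbf f=f_l\mathbf g_k+\iota(\mathbf f')$ and the inclusion $\iota(\sqrt[\rr]{N'})\subseteq\sqrt[\rr]{\iota(N')}$. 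Separately (Claim 2), it applies Theorem~\ref{thm1} coordinatewise to get $f_j^{2t_j}+\sum s_{ij}^2\in I$ for $I=\sum Rg_{kl}$, tensors this with $\mathbf f\otimes\mathbf f$ to obtain $f_j^{t_j}\mathbf f\in\sqrt[\rr]{I\mathbf f}$, and combines it with the elementary fact $\mathbf h\in\sqrt[\rr]{(\sum Rh_j)\mathbf h}$ to conclude $\mathbf f\in\sqrt[\rr]{I\mathbf f}$. Claims 1 and 2 together give $\mathbf f\in\sqrt[\rr]{I\mathbf f}\subseteq\sqrt[\rr]{\sqrt[\rr]{N}}=\sqrt[\rr]{N}$. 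To salvage your draft you would need to replace the $\pi$/$N\cap R^{n-1}$ split by this cross-elimination and then still supply the Claim~2 mechanism; as it stands, the crux you identify is left open.
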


\begin{proof}
To prove the inclusion $\sqrt[\rr]{N} \subseteq \sqrt[\cE]{N}$ it suffices to show that $\sqrt[\cE]{N}$ is a real submodule of $R^n$
containing $N$. Suppose that $$\sum_i \mathbf{s}_i \otimes \mathbf{s}_i \in M \otimes \sqrt[\cE]{N}+\sqrt[\cE]{N} \otimes M.$$
If $\langle \phi^n(N),\mathbf{u}\rangle=0$ for some $(\phi,\mathbf{u}) \in V_\RR(R)$,
then also $\sum_i \langle \phi^n(\mathbf{s}_i),\mathbf{u} \rangle^2=
\langle (\phi^n \otimes \phi^n)(\sum_i \mathbf{s}_i \otimes \mathbf{s}_i)), \mathbf{u} \otimes \mathbf{u} \rangle \in
\langle \phi^n(M) \otimes \phi^n(N)+\phi^n(N) \otimes \phi^n(M),\mathbf{u} \otimes \mathbf{u} \rangle
= \langle \phi^n(M), \mathbf{u} \rangle \langle \phi^n(N), \mathbf{u} \rangle +
\langle \phi^n(N), \mathbf{u} \rangle \langle \phi^n(M), \mathbf{u} \rangle =0.$
It follows that $\langle \phi^n(\mathbf{s}_i),\mathbf{u} \rangle=0$ for every $i$, and so, 
$\mathbf{s}_i \in \sqrt[\cE]{N}$ for every $i$.

We will prove the opposite inclusion, $\sqrt[\cE]{N} \subseteq \sqrt[\rr]{N}$, by induction on $n$. The case $n=1$ clearly follows from Theorem \ref{thm1}. 
Suppose now that the inclusion holds for every submodule of $R^{n-1}$ and take any submodule $N$ of $M=R^n$.

Since $R$ is Noetherian, so is $M$. In particular, $N$ is finitely generated, i.e. 
there exist $\mathbf{g}_1,\ldots,\mathbf{g}_m \in M$ such that $$N=\sum_{i=1}^m R \, \mathbf{g}_i.$$
For each $i=1,\ldots,m$ we write
$$\mathbf{g}_i =\sum_{j=1}^n g_{ij} \mathbf{e}_j$$
where $\mathbf{e}_1,\ldots,\mathbf{e}_n$ is the standard basis of $M$. Now pick any
$$\mathbf{f}=\sum_{j=1}^n f_j \mathbf{e}_j \in \sqrt[\cE]{N}.$$ 
The proof that $\mathbf{f} \in \sqrt[\rr]{N}$ will be split into two claims.

\subsection*{Claim 1} $g_{kl} \mathbf{f} \in \sqrt[\rr]{N}$ for every $k=1,\ldots,m$ and $l=1,\ldots,n$.

\medskip

For simplicity we will prove Claim 1 only for $l=1$ but the same argument also works for other $l$.
Pick $k=1,\ldots,m$ and write
$$\mathbf{g}'_i=\sum_{j=2}^n (g_{k1} g_{ij}- g_{kj}g_{i1})\mathbf{e}'_{j-1}$$
for all $i=1,\ldots,m$, where $\mathbf{e}'_1,\ldots,\mathbf{e}'_{n-1}$ is the standard basis of $R^{n-1}$.
Let $N'$ be the submodule of $R^{n-1}$ generated by $\mathbf{g}'_1,\ldots, \mathbf{g}'_m$. 
Let us verify that the element $$\mathbf{f}' = \sum_{j=2}^n (g_{k1} f_j-g_{kj} f_1)\mathbf{e}'_{j-1}$$
belongs to $\sqrt[\rr]{N'}$. 

By the induction hypothesis, it suffices to verify that $\mathbf{f}' \in \sqrt[\cE]{N'}$. 
Suppose that for some $\phi \in V_\RR(R)$ and some $\mathbf{v} \in R^{n-1}$, we have that 
$$\sum_{j=2}^n \phi(g_{k1} g_{ij}- g_{kj}g_{i1})v_{j-1}=0 \text{ for } i=1,\ldots,m.$$
Then $\phi(g_{i1}) u_1+\ldots+\phi(g_{in})u_n=0$ for $u_1=-\sum_{j=2}^n \phi(g_{kj}) v_{j-1}$.
$u_2=\phi(g_{k1})v_1,\ldots,u_n=\phi(g_{k1})v_{n-1}$ and all $i=1,\ldots,m$.
Since $\mathbf{f} \in \sqrt[\cE]{N}$, it follows that $\phi(f_1) u_1+\ldots+\phi(f_n)u_n=0$
which can be rewritten as $$\sum_{j=2}^n \phi(g_{k1} f_j-g_{kj} f_1)v_{j-1}=0.$$

Let $\iota \colon R^{n-1} \to R^n$ be the natural embedding to the last $n-1$ components. 
Since $\mathbf{f}' \in \sqrt[\rr]{N'}$, it follows that
\begin{equation}
\label{eq4}
\iota(\mathbf{f}')=\sum_{j=2}^n (g_{k1} f_j-g_{kj} f_1)\mathbf{e}_j=g_{k1} \mathbf{f} -f_1 \mathbf{g}_k \in \iota(\sqrt[\rr]{N'}).
\end{equation}
In the next paragraph, we will show that 
\begin{equation}
\label{eq5}
\iota(\sqrt[\rr]{N'}) \subseteq \sqrt[\rr]{\iota(N')}.
\end{equation}
On the other hand, $\iota(N')$ is contained in $N$ since it is generated by 
\begin{equation}
\label{eq6}
\iota(\mathbf{g}'_i)= \sum_{j=2}^n (g_{k1} g_{ij}- g_{kj}g_{i1})\mathbf{e}_j=g_{k1} \mathbf{g}_i-g_{i1} \mathbf{g}_k \in N
\end{equation}
for $i=1,\ldots,m$. Claim 1 follows from \eqref{eq4}, \eqref{eq5} and \eqref{eq6} since
$$g_{k1} \mathbf{f} =f_1 \mathbf{g}_k+\iota(\mathbf{f}') \in \sqrt[\rr]{N}.$$

Let us show that $N'':=\iota^{-1}(\sqrt[\rr]{\iota(N')})$
is a real submodule of $M'=R^{n-1}$. Suppose that $\sum_i \mathbf{h}_i \otimes \mathbf{h}_i \in 
M' \otimes N''+N'' \otimes M'$, It follows that
$\sum_i \iota(\mathbf{h}_i) \otimes \iota(\mathbf{h}_i) \in \iota(M') \otimes \iota(N'')+\iota(N'') \otimes \iota(M')
\subseteq M \otimes \sqrt[\rr]{N'}+\sqrt[\rr]{N'} \otimes M$. Since $\sqrt[\rr]{N'}$ is real, it follows that
$\iota(\mathbf{h}_i) \in \sqrt[\rr]{N'}$ for all $i$, and so, $\mathbf{h}_i \in N''$ for all $i$.
Since $N''$ is real and it contains $N'$, it also contains $\sqrt[\rr]{N'}$. This proves \eqref{eq5}.

\subsection*{Claim 2} $\mathbf{f} \in \sqrt[\rr]{I \mathbf{f}}$ where $I=\sum_{k=1}^m \sum_{l=1}^n R g_{kl}$. 

\medskip

Since $\mathbf{f} \in \sqrt[\cE]{N}$, it follows that for every $j=1,\ldots,n$ and every $\phi \in V_\RR(R)$,
we have $\phi(f_j)=0$ whenever $\phi(g_{1j})=\ldots=\phi(g_{mj})=0$. (Pick $\mathbf{u}$ from the standard basis of $\RR^n$.) By Theorem \ref{thm1}, there exists
an integer $t_j$ and elements $s_{ij} \in R$ such that $f_j^{2t_j}+\sum_i s_{ij}^2 \in I$.
If we multiply this with $\mathbf{f} \otimes \mathbf{f}\in M \otimes M$, we get that
$$f_j^{t_j} \mathbf{f} \otimes f_j^{t_j} \mathbf{f}+\sum_i s_{ij} \mathbf{f} \otimes s_{ij} \mathbf{f} \in I(\mathbf{f} \otimes \mathbf{f})
\subseteq M \otimes I\mathbf{f}+I\mathbf{f} \otimes M.$$
It follows that $f_j^{t_j} \mathbf{f} \in \sqrt[\rr]{I\mathbf{f}}$ for every $j=1,\ldots,n$.
If $t$ is large enough then $(\sum_{j=1}^n R f_j)^t \subseteq \sum_{j=1}^n R f_j^{t_j}$ which implies that 
\begin{equation}
\label{eq7}
(\sum_{j=1}^n R f_j)^t \mathbf{f} \subseteq (\sum_{j=1}^n R f_j^{t_j})\mathbf{f} \subseteq \sqrt[\rr]{I\mathbf{f}}.
\end{equation}

On the other hand,  $2 (\mathbf{h} \otimes \mathbf{h})=(\sum_{j=1}^n h_j \mathbf{e}_j) \otimes \mathbf{h}+\mathbf{h}\otimes(\sum_{j=1}^n h_j \mathbf{e}_j)
= \sum_{j=1}^n (\mathbf{e}_j \otimes h_j \mathbf{h}+h_j \mathbf{h} \otimes \mathbf{e}_j) \in M \otimes (\sum_{j=1}^n R h_j) \mathbf{h} +(\sum_{j=1}^n R h_j) \mathbf{h} \otimes M$
for every $\mathbf{h} \in M=R^n$, which implies that
$\mathbf{h} \in \sqrt[\rr]{(\sum_{j=1}^n R h_j) \mathbf{h}}.$
By induction, it follows that
\begin{equation}
\label{eq8}
\mathbf{f} \in \sqrt[\rr]{(\sum_{j=1}^n R f_j)^t \mathbf{f}}
\end{equation}
for every $t$. Claim 2 follows from \eqref{eq7} and \eqref{eq8}.
\end{proof}

If $R=\RR[x_1,\ldots,x_d]$ and $g_1,\ldots,g_m,f \in R$ are linear polynomials such that $f(a)=0$ for every $a \in R^d$
satisfying $g_1(a)=\ldots,g_m(a)=0$, then $f$ is clearly a linear combination of $g_1,\ldots,g_m$. This observation does not
generalize to free modules of rank $\ge 2$. Namely, take 
$$\mathbf{g}_1=(x_1,x_1+x_2), \quad \mathbf{g}_2=(-x_1,x_1-x_2), \quad \mathbf{f}=(x_1,0)$$
from $R^2$. Note that $\mathbf{g}_1,\mathbf{g_2}$ and $\mathbf{f}$ are linear and that  $\mathbf{f}$ is not a linear combination of $\mathbf{g}_1$ and $\mathbf{g}_2$. 
On the othe hand,
$$\mathbf{f} \in \sqrt[\rr]{R\mathbf{g}_1+R\mathbf{g}_2} \subseteq \sqrt[\cE]{R\mathbf{g}_1+R\mathbf{g}_2}$$
since 
$\mathbf{f} \otimes \mathbf{f}=\mathbf{r}_1 \otimes \mathbf{g}_1+\mathbf{g}_1 \otimes \mathbf{r}_1+\mathbf{r}_2 \otimes \mathbf{g}_2+\mathbf{g}_2 \otimes \mathbf{r}_2$
where $\mathbf{r}_1 =(\frac{x_1-x_2}{4},0)$ and $ \mathbf{r}_2=(\frac{-x_1-x_2}{4},0)$.

\section{A one-sided Real Nullstellensatz for Matrices}

As the main application of Theorem \ref{thm2} we prove the one-sided Real Nullstellensatz for matrix polynomials
which was already conjectured in \cite[Section 6]{chmn} but it was proved there only in the case of one variable.

Let $n$ be an integer, $R$  a finitely generated commutative unital $\RR$-algebra and
$\chrisA=M_n(R)$ the algebra of all $n \times n$ matrices over $R$ with transpose as involution.
The set $V_\RR(\chrisA)$ consists of all pairs $(\phi,\mathbf{u})$ where $\phi \in V_\RR(R)$ and $\mathbf{u} \in \RR^n$.
Note that is equal to $V_\RR(R^n)$. 

For every left ideal $J$ of $\chrisA$, we define its \textbf{real zero set} 
$$Z_\RR(J):=\{(\phi,\mathbf{u}) \in V_\RR(\chrisA) \mid \phi_n(G)\mathbf{u}=0 \text{ for all } G \in J\}$$
($\phi_n(G)$ means that we apply $\phi$ entrywise to $G$) and its \textbf{real saturation}
$$\sqrt[\cE]{J}:=\{F \in \chrisA \mid \phi_n(F)\mathbf{u}=0 \text{ for all } (\phi,\mathbf{u}) \in Z_\RR(J)\}.$$
We say that a left ideal $J$ of $\chrisA$ is \textbf{real} if for every
$H_1,\ldots,H_k \in \chrisA$ such that $H_1^T H_1+\ldots+H_k^T H_k \in J+J^T$
we have that $H_1,\ldots,H_k \in J$. The smallest real left ideal of $\chrisA$
which contains a given left ideal $J$ of $\chrisA$ is denoted by $\sqrt[\rr]{J}$
and called the \textbf{real radical} of $J$. The analogue of formulas \eqref{eq1} and \eqref{eq3}
can be found in \cite[Section 5]{chmn}.

\begin{thm}
\label{thm3}
For every left ideal $J$ of $M_n(R)$, $$\sqrt[\cE]{J}=\sqrt[\rr]{J}.$$
\end{thm}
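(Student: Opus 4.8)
The plan is to reduce Theorem \ref{thm3} to Theorem \ref{thm2} by exhibiting a dictionary between left ideals of $\chrisA = M_n(R)$ and submodules of the free module $M = R^n$. The key observation is that the columns of a matrix $G \in \chrisA$ are elements of $R^n$, and left multiplication by $R$-scalars and addition of matrices corresponds to the module operations on columns. More precisely, I would associate to a left ideal $J$ of $\chrisA$ the submodule $N(J) \subseteq R^n$ spanned by all columns of all matrices in $J$; conversely, to a submodule $N$ of $R^n$ I would associate the left ideal $J(N)$ of all matrices whose columns all lie in $N$. One checks that $J(N)$ is indeed a left ideal (left multiplication by any matrix sends each column into the $R$-span of the columns of the factor, hence stays in $N$), that $N(J(N)) = N$, and that $J(N(J)) = J$ because a matrix is determined by its columns and $J$ being a left ideal already forces it to contain every matrix built out of columns of its members (multiply on the left by elementary matrices $E_{ij}$ to move columns around and zero others out).

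The second step is to translate the two sides of the equality in Theorem \ref{thm3} through this dictionary. For the real saturation: $\phi_n(F)\mathbf{u} = 0$ says exactly that $\langle \phi^n(\mathbf{c}), \mathbf{u}\rangle = 0$ is false in general — rather, $\phi_n(F)\mathbf{u}$ is the vector whose $i$-th entry is $\sum_j \phi(F_{ij}) u_j$, so $\phi_n(F)\mathbf{u} = 0$ is equivalent to $\langle \phi^n(\mathbf{r}_i), \mathbf{u}\rangle = 0$ for every \emph{row} $\mathbf{r}_i$ of $F$. So I should set up the dictionary using \emph{rows} rather than columns, or equivalently compose with transpose at the end; using rows, $Z_\RR(J)$ for the left ideal $J$ matches $Z_\RR(N)$ for the submodule $N$ spanned by all rows of all matrices in $J$, and then $\sqrt[\cE]{J}$ corresponds to the set of matrices all of whose rows lie in $\sqrt[\cE]{N}$. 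This is the right bookkeeping: a left ideal of $M_n(R)$ is the same as an $R$-submodule of $R^n$ read row-wise, since left multiplication acts on columns but the condition $G\mathbf{u}=0$ is row-wise, and one must be careful that "left ideal" in the matrix picture corresponds to "submodule closed under the operations that preserve the row-span", which it does.

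The third step is to match the real radicals. I would verify that a left ideal $J$ is real in the sense of Section 3 if and only if the corresponding row-submodule $N$ is real in the sense of Section 2. The identity to exploit is that $\sum_k H_k^T H_k \in J + J^T$ unpacks, entrywise, into a statement about $\sum_k \mathbf{h}_k \otimes \mathbf{h}_k$ lying in $M \otimes N + N \otimes M$ where $\mathbf{h}_k$ ranges over rows of the $H_k$ — indeed $(H^T H)_{ij} = \sum_\ell H_{\ell i} H_{\ell j}$ is precisely the $(i,j)$ entry of $\sum_\ell (\text{row}_\ell H) \otimes (\text{row}_\ell H)$ under the identification of $M \otimes M$ with $n \times n$ matrices over $R$, and $J + J^T$ matches $M \otimes N + N \otimes M$ because the column-span of $J$ is $N$ read the other way. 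Granting this, realness transfers both ways, hence $\sqrt[\rr]{J}$ corresponds to $\sqrt[\rr]{N}$, and Theorem \ref{thm3} follows by applying Theorem \ref{thm2} to $N$ and reading the conclusion back through the dictionary (a matrix $F$ lies in $\sqrt[\cE]{J} = \sqrt[\rr]{J}$ iff each of its rows lies in $\sqrt[\cE]{N} = \sqrt[\rr]{N}$).

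The main obstacle, and the part requiring the most care, is the third step: getting the tensor-square / transpose-sum correspondence exactly right, including the asymmetry between rows and columns. One has to pin down a single consistent identification of $M_n(R)$ with $R^n$ (rows for the zero-set condition, columns for the left-ideal structure) and check that $J + J^T$ on the matrix side really does correspond to $M \otimes N + N \otimes M$ and not to some larger or smaller submodule; the presence of both $J$ and $J^T$ is what makes this work and what makes it delicate. Everything else — that $J(N)$ is a left ideal, that the dictionary is a bijection, that realness is preserved — is routine linear algebra over $R$ once the identifications are fixed. I expect the clean way to present it is: fix the row-identification, prove $N \mapsto J(N)$ and $J \mapsto N(J)$ are mutually inverse and carry real zero sets, real saturations, and real submodules to their matrix counterparts, then quote Theorem \ref{thm2}.
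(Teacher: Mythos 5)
Your proposal is correct and follows essentially the same route as the paper: decompose the left ideal $J$ into the $R$-submodule $N\subseteq R^n$ of rows of its elements, identify $M\otimes M$ with $M_n(R)$ via $\mathbf{a}\otimes\mathbf{b}\mapsto\mathbf{a}^T\mathbf{b}$ (so $M\otimes N\leftrightarrow J$ and $N\otimes M\leftrightarrow J^T$), show $J$ is real iff $N$ is real using the entrywise identity $(H^TH)_{ij}=\sum_\ell H_{\ell i}H_{\ell j}$, and then invoke Theorem~\ref{thm2}. Your opening paragraph's column-based dictionary would not yield a left ideal (left multiplication acts on columns by $M_n(R)$, not by $R$), but you catch this yourself and switch to the row-based dictionary, which is precisely what the paper does.
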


\begin{proof}
Let $N$ be the set of all rows of all elements of $J$. Clearly, $N$ is a submodule of $M=R^n$ and
$$J=\left[ \begin{array}{c} N \\ \vdots \\ N \end{array} \right].$$
We claim that
$$\sqrt[\cE]{J}=\left[ \begin{array}{c} \sqrt[\cE]{N} \\ \vdots \\ \sqrt[\cE]{N} \end{array} \right]=
\left[ \begin{array}{c} \sqrt[\rr]{N} \\ \vdots \\ \sqrt[\rr]{N} \end{array} \right]=\sqrt[\rr]{J}.$$
The first equality is clear from the definitions and the second equality follows from Theorem \ref{thm2}.
To prove the third equality, it suffices to show (by a slight abuse of notation) that $J$ is real if and only if $N$ is real.
In the following, we will identify $M \otimes M$ with $M_n(R)$ by sending $\mathbf{a} \otimes \mathbf{b}$
into $\mathbf{a}^T \mathbf{b}$. This identification sends $M \otimes N$ into $J$ and $N \otimes M$ into $J^T$.

Suppose that $J$ is real and $\sum_i \mathbf{m}_i \otimes \mathbf{m}_i \in M \otimes N+N \otimes M$
for some $\mathbf{m}_i \in M$. It follows that $\sum_i \mathbf{m}_i^T \mathbf{m}_i \in J+J^T$. Therefore,
$$\left[ \begin{array}{c} \mathbf{m}_i \\ \vdots \\ 0 \end{array} \right] \in J$$
for every $i$. This proves that $\mathbf{m}_i \in N$ for every $i$, and so, $N$ is real.

Conversely, suppose that $N$ is real and $\sum_i H_i^T H_i \in J+J^T$ for some $H_i \in M_n(R)$.
It follows that $\sum_{i,j} \mathbf{h}_{ij}^T \mathbf{h}_{ij} \in J+J^T$ where
$$H_i = \left[ \begin{array}{c} \mathbf{h}_{i1} \\ \vdots \\ \mathbf{h}_{in} \end{array} \right]$$
for every $i$. It follows that $\sum_{i,j} \mathbf{h}_{ij} \otimes \mathbf{h}_{ij} \in M \otimes N+N \otimes M$.
Since $N$ is real, $\mathbf{h}_{ij} \in N$ for all $i,j$, which shows that $H_i \in J$ for all $i$.
\end{proof}

\end{document}